\newcommand{\id}{{\rm id}}
\newtheorem{lemma1}{}[section]
\newenvironment{lemma}{\begin{lemma1}{\bf Lemma.}}{\end{lemma1}}
\newenvironment{example}{\begin{lemma1}{\bf Example.}\rm}{\end{lemma1}}
\newenvironment{theorem}{\begin{lemma1}{\bf Theorem.}}{\end{lemma1}}
\newenvironment{remark}{\begin{lemma1}{\bf Remark.}\rm}{\end{lemma1}}
\newenvironment{definition}{\begin{lemma1}{\bf Definition.}}{\end{lemma1}}
\newenvironment{problem}{\begin{lemma1}{\bf Problem.}}{\end{lemma1}}
\newenvironment{remark*}{{\bf Remark.}}{}
\newenvironment{example*}{{\bf Example.}}{}
\newcommand{\C}{\ensuremath{\mathbb{C}}}
\newcommand{\PP}{\ensuremath{\mathbb{P}}}
\newcommand{\holom}[3]{\ensuremath{#1:#2  \rightarrow #3}}
\newcommand{\fibre}[2]{\ensuremath{#1^{-1} (#2)}}
\newcommand\sF{{\mathcal F}}
\newcommand\sK{{\mathcal K}}
\newcommand\sO{{\mathcal O}}
\newcommand{\upA}{\ensuremath{\tilde{A}}}
\newcommand{\PX}{\PP(T_X)}
\title{Normal split submanifolds of rational homogeneous spaces}
\date{October 24, 2022}
\author {Enrica Floris}
\author {Andreas H\"oring}
\address{Enrica Floris, Universit\'e de Poitiers, Laboratoire de Math\'ematiques et Applications,\linebreak UMR~CNRS 7348, T\'el\'eport 2, Boulevard Marie et Pierre Curie, BP 30179, 86962 Futuroscope Chasseneuil Cedex, France}
\email{enrica.floris@univ-poitiers.fr}
\address{Andreas H\"oring, Universit\'e C\^ote d'Azur, CNRS, LJAD, France, Institut universitaire de France}
\email{Andreas.Hoering@univ-cotedazur.fr}
\begin{document}

\begin{abstract}
Let $M \subset X$ be a submanifold of a rational homogeneous space $X$ such that
the normal sequence splits. We prove that $M$ is also rational homogeneous.
\end{abstract}

\maketitle

%\tableofcontents

\section{Introduction}

Let $M \subset \PP^n$ be a projective manifold such that the normal sequence
$$
0 \rightarrow T_M \rightarrow T_{\PP^n} \otimes \sO_M \rightarrow N_{M/\PP^n} \rightarrow 0
$$
splits. Since the tangent bundle of the projective space is ample, the existence of a splitting map $T_{\PP^n} \otimes \sO_M \twoheadrightarrow T_M$ yields that $T_M$ is also ample, hence $M \simeq \PP^{\dim M}$ by Mori's theorem \cite{Mor79}.
In fact a theorem of van de Ven \cite{vdv59} states that $M \subset \PP^n$ is a linear subspace. It is natural to expect that similar statements exist for submanifolds of arbitrary homogeneous spaces. The case of tori has been solved by Jahnke \cite[Thm.3.2]{Jah05}, so we consider the following

\begin{problem}
Let $X \simeq G/P$ be a rational homogeneous space. Let $i: M \hookrightarrow X$ be a submanifold such that the normal sequence
\begin{equation}
\label{normal}
0 \rightarrow T_M \stackrel{\tau_i}{\rightarrow} T_X \otimes \sO_M \rightarrow N_{M/X} \rightarrow 0
\end{equation}
splits, i.e. there exists a morphism $\holom{s_M}{T_X \otimes \sO_M}{T_M}$ such that $s_M \circ \tau_i = \id_{T_M}$.
\begin{itemize}
\item Describe $M$ up to isomorphism.
\item Describe the embedding $i: M \hookrightarrow X$.
\end{itemize}
\end{problem}

Both problems have been solved for special classes of rational homogeneous spaces like hyperquadrics, Grassmannians \cite[Thm.4.7, Prop.5.2]{Jah05}, irreducible compact Hermitian symmetric spaces \cite[Thm.2.3]{Din22} or certain blow-ups of the projective space \cite{Jah05, Li22}.
In this paper we focus on the first problem and prove a structure result that holds without further assumptions on $X$,
thereby improving \cite[Prop.2.2]{Jah05} and \cite[Thm.2.1]{Din22}:

\begin{theorem} \label{theorem-split}
Let $X \simeq G/P$ be a rational homogeneous space, and let $M \subset X$ be a  submanifold that is normal split (cf. Definition \ref{definition-normal-split}). Then $M$ is rational homogeneous.
\end{theorem}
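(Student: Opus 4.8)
The plan is to upgrade the algebraic splitting into a positivity statement, deduce that $M$ is homogeneous, invoke the Borel--Remmert structure theorem, and then expend the real effort on excluding an abelian factor.

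\textbf{Step 1 (global generation and homogeneity).} Since $X=G/P$ is homogeneous, the infinitesimal action of $G$ yields a surjection $\mathfrak{g}\otimes\sO_X\twoheadrightarrow T_X$, where $\mathfrak{g}=\mathrm{Lie}(G)$ and the fibre of the kernel at $x$ is the isotropy subalgebra $\mathfrak{g}_x=\mathfrak{p}_x$. Restricting to $M$ and composing with the retraction $s_M$ gives a surjection $\mathfrak{g}\otimes\sO_M\twoheadrightarrow T_X\otimes\sO_M\twoheadrightarrow T_M$, so $T_M$ is globally generated. As $H^0(M,T_M)=\mathrm{Lie}\,\Autz(M)$ and the evaluation $H^0(M,T_M)\to T_{M,x}$ is the tangent map of the $\Autz(M)$-orbit through $x$, global generation forces every orbit to be open; since $M$ is connected there is a single orbit, so $\Autz(M)$ acts transitively and $M$ is a homogeneous projective manifold.

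\textbf{Step 2 (Borel--Remmert and reduction to an abelian variety).} Being a connected compact homogeneous K\"ahler manifold, $M$ decomposes by Borel--Remmert as $M\cong A\times Y$ with $A$ a complex torus, necessarily $A=\Alb(M)$ since $Y$ is rational homogeneous hence rationally connected, and $Y$ rational homogeneous. It therefore suffices to prove $\dim A=0$. Fixing $y_0\in Y$ and restricting $s_M$ to the fibre $A_0:=A\times\{y_0\}$, then composing with the product projection $T_M\otimes\sO_{A_0}=T_{A_0}\oplus(T_Y\otimes\sO_{A_0})\to T_{A_0}$, one checks directly that $A_0\hookrightarrow X$ is again normal split. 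We are thus reduced to the assertion that a positive-dimensional abelian variety cannot be normal split in $G/P$.

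\textbf{Step 3 (the crux, translated into Lie theory).} Write $M=A$, $q=\dim A$. The splitting gives $T_X\otimes\sO_A\cong\sO_A^{\oplus q}\oplus N_{A/X}$ and dually a split trivial quotient $\Omega_X\otimes\sO_A\twoheadrightarrow\Omega_A\cong\sO_A^{\oplus q}$, whose sections are the $q$ translation-invariant $1$-forms on $A$. Dualizing $\mathfrak{g}\otimes\sO_X\twoheadrightarrow T_X$ embeds $\Omega_X\hookrightarrow\mathfrak{g}^*\otimes\sO_X$ with fibre $(\Omega_X)_x=\mathfrak{p}_x^{\perp}$; composing, each invariant form becomes a global section of $\mathfrak{g}^*\otimes\sO_A$, i.e. a constant $\lambda\in\mathfrak{g}^*$ because $A$ is compact and connected. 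Evaluating fibrewise shows $\lambda\in\mathfrak{p}_a^{\perp}$ for every $a\in A$, and via the Killing form $\mathfrak{p}_a^{\perp}$ corresponds to the nilradical $\mathfrak{n}_a\subset\mathfrak{p}_a$; hence the dual element $\xi_\lambda\in\mathfrak{g}$ lies in $\mathfrak{n}_a$ for all $a$, so the Killing field $\widehat{\xi_\lambda}$ vanishes identically on $A$. The assignment $\omega\mapsto\lambda$ is injective, since $\lambda=0$ would make $\omega$ pair to zero with all $s_M(\widehat{\xi}\,|_A)$, which span $T_A$. Therefore $q>0$ produces a nonzero nilpotent $\xi\in\mathfrak{g}$ whose vector field vanishes on all of $A$.

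\textbf{Step 4 (main obstacle).} The hard part is to contradict the existence of such a $\xi$, and this is precisely where the splitting, rather than mere global generation of $T_M$, must enter: an abstract product $A\times Y$ is perfectly homogeneous with globally generated tangent bundle, so the rational homogeneity of $M$ genuinely requires the ambient Fano geometry. I would study the linearization of $\widehat{\xi}$ along its zero locus: since $\widehat{\xi}\,|_A\equiv0$, its normal derivative descends to a bundle map $N_{A/X}\to T_X\otimes\sO_A$ governed fibrewise by $\mathrm{ad}(\xi)$ on $\mathfrak{g}/\mathfrak{p}_a$, which is nilpotent because $\xi\in\mathfrak{n}_a$. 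Confronting this nilpotent normal datum with the retraction $s_M$ and with the ampleness of $-K_X\otimes\sO_A=\det N_{A/X}$ (equivalently, the nefness of $T_X\otimes\sO_A$ together with the absence of rational curves on $A$) should force $\xi=0$, hence $q=0$ and $M\cong Y$ rational homogeneous. Making this last confrontation precise, that is, showing a split trivial summand cannot coexist with the positivity of $T_X$ along an abelian variety, is the decisive and most delicate step.
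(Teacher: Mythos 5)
Your Steps 1 and 2 reproduce the paper's reduction: global generation of $T_M$ via the splitting, Borel--Remmert, and the transitivity of the normal split condition for $A\times\{y_0\}\subset M\subset X$ (Lemma \ref{lemma-reductions}, using \cite[1.2.2]{Jah05}). Your Step 3 is correct and is in fact an equivalent, Lie-theoretic phrasing of what the paper does on $\PP(T_X)$: the constancy of $[\lambda]\in\PP(\mathfrak{g}^*)$ along $A$ is exactly the statement that the lift $\upA\subset\PP(T_X)$ determined by the trivial quotient $q_A\circ s_A\colon T_X\otimes\sO_A\to\sO_A$ is contracted to a single point by the morphism $\varphi_X$ of \eqref{define-varphiX}, whose target lies in $\PP(H^0(X,T_X))$; and your injectivity argument for $\omega\mapsto\lambda$ is essentially the content of Lemma \ref{lemma-observation} (surjectivity of $q\circ\tau_A$ onto $\sO_{\PP(T_X)}(1)\otimes\sO_{\upA}$).

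The genuine gap is Step 4, which you explicitly leave open, and it is precisely the heart of the theorem: everything up to that point uses only that $T_X$ is globally generated, which cannot suffice (as your own remark about $A\times Y$ shows). The paper closes the argument not by a positivity analysis of the nilpotent linearization along $A$ -- a route that looks hazardous, since quotients of big bundles need not be big and the fibres of $\varphi_X$ need not be smooth or rationally connected componentwise -- but by contact geometry. Because $\sO_{\PP(T_X)}(1)$ is nef and big on $\PP(T_X)$ (Richardson), $\varphi_X$ is a crepant birational morphism onto a contact Fano variety, and passing to the associated $\C^*$-bundles turns it into a symplectic resolution; fibres of symplectic resolutions are isotropic (Wierzba, Kaledin), which translates into Lemma \ref{lemma-key}: any smooth subvariety contracted by $\varphi_X$ to a point is integral for the contact distribution $\sF\subset T_{\PP(T_X)}$. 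On the other hand, Lemma \ref{lemma-observation} shows that the splitting forces the contact map to restrict surjectively to $T_{\upA}$, so $\upA$ is \emph{not} integral -- the desired contradiction. If you want to complete your version, the missing input is exactly this isotropy statement for $\varphi_X$-fibres (equivalently, that your nilpotent $\xi_\lambda$, viewed as the image point of $\upA$ in the projectivised nilpotent orbit closure, has isotropic fibre over it); your proposed confrontation of ``nilpotent normal datum'' with $\det N_{A/X}$ ample is not carried out and it is not clear it can be.
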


Since the vector bundle $T_X \otimes \sO_M$ is globally generated and $M \subset X$ is normal split, the tangent bundle of $T_M$ is globally generated. Thus $M$ is homogeneous and by \cite[Satz 1]{BR62} we have
$$
M \simeq A \times Y
$$
where $A$ is an abelian variety and $Y$ is rational homogeneous.
Thus in order to prove Theorem \ref{theorem-split} we can assume without loss of generality that $M \simeq A$ is abelian (cf. Lemma \ref{lemma-reductions}). Since the tangent bundle of the rational homogeneous space $T_X$ is big \cite{Ric74} and the tangent bundle of an abelian variety is not, one is tempted to argue as in the case of the projective space. Note however that the quotient of a big vector bundle is in general not big, so a priori neither $T_X \otimes \sO_A$ nor its quotient $T_A$ are big. 
Therefore our argument proceeds in a different fashion:
denote by 
$$
\holom{\pi}{\PP(T_X)}{X}
$$ 
the Grothendieck projectivisation of the tangent bundle. The global sections of $T_X$ define a morphism
\begin{equation}
\label{definition-varphi}
\holom{\varphi_X}{\PP(T_X)}{\PP(H^0(X, T_X))}
\end{equation}
that is generically finite onto its image. Since $A$ is abelian, the splitting map allows to define
a lifting $A \rightarrow \PX$ such that the image $\upA$ is contracted by $\varphi_X$.
If the fibres of $\varphi_X$ are smooth and rationally connected the existence of $\upA$ is often enough to obtain a contradiction, cp. \cite{Jah05}.
For an arbitrary rational homogeneous space $X$ the fibres of $\varphi_X$ are not necessarily smooth
(cf. also Remark \ref{remark-fibres-varphi}), but we show in Lemma \ref{lemma-key}
that manifolds contracted by $\varphi_X$ are always integral submanifolds with respect to the contact structure on $\PX$. Theorem \ref{theorem-split} then follows without too much difficulty.

Our result can be extended to normal split submanifolds of projective manifolds such that the tangent bundle $T_X$ is nef and big, cf. Remark \ref{remark-nef-and-big}. By a well-known conjecture of Campana and Peternell \cite{CP91}, these are exactly the rational homogeneous spaces.

{\bf Acknowledgements.} The authors thank B. Fu, J. Liu, L. Manivel, B. Pasquier and R. \'Smiech for answering their ignorant questions about homogeneous spaces and contact manifolds.
The second-named author thanks the Institut Universitaire de France for providing excellent working conditions.

\section{Notation and basic facts}

We work over the complex numbers, for general definitions we refer to \cite{Har77}. 
Varieties will always be supposed to be irreducible and reduced. 
We use the terminology of \cite{Deb01, KM98}  for birational geometry and notions from the minimal model program. We follow \cite{Laz04a} for algebraic notions of positivity.

Given a morphism $\holom{\varphi}{A}{B}$ between complex manifolds we denote by
$$
\holom{\tau_\varphi}{T_A}{\varphi^* T_B}
$$
the tangent map. For a submanifold $M \hookrightarrow X$ of  a complex manifold $X$, we will simply denote the tangent map by
$$
\holom{\tau_M}{T_M}{T_X \otimes \sO_M}.
$$

\begin{definition} \label{definition-normal-split}
Let $X$ be a complex manifold, and let $\holom{i}{M}{X}$ be a submanifold. We say that
the $M$ is normal split in $X$ if the inclusion 
$$
0 \rightarrow T_M \stackrel{\tau_i}{\rightarrow} T_X \otimes \sO_M
$$ 
admits a splitting morphism $s_M: T_X \otimes \sO_M \rightarrow T_M$ such that $s_M \circ \tau_i = \id_{T_M}$.
\end{definition}

\begin{remark*} While $\tau_i$ is determined by the embedding, 
the splitting morphism $s_M$ is not unique. Our statements will not depend on the choice of $s_M$.
\end{remark*}

\begin{lemma} \label{lemma-reductions}
Let $X \simeq G/P$ be a rational homogeneous space, and let $M \subset X$ be a submanifold that is normal split.
If $M$ is not rational homogeneous, there exists an abelian
variety  of positive dimension  $A \subset X$ that is normal split.
\end{lemma}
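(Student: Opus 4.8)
The plan is to combine the global generation of $T_M$ with the Borel–Remmert structure theorem, and then transport the normal splitting from $M$ to an abelian slice. First I would note that since $X \simeq G/P$ is rational homogeneous, $T_X$ is globally generated, hence so is $T_X \otimes \sO_M$. Because $M$ is normal split, the map $s_M$ realises $T_M$ as a quotient of $T_X \otimes \sO_M$, and a quotient of a globally generated bundle is globally generated; thus $T_M$ is globally generated. The global vector fields in $H^0(M, T_M)$ then span the tangent space at every point and integrate to a transitive action of $\Autz(M)$, so $M$ is homogeneous. By \cite[Satz 1]{BR62} we obtain $M \simeq A \times Y$ with $A$ an abelian variety and $Y$ rational homogeneous, and if $M$ is not rational homogeneous then necessarily $\dim A > 0$ (otherwise $M \simeq Y$ would itself be rational homogeneous).

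Next I would fix a point $y \in Y$ and set $A' := A \times \{y\} \subset M \subset X$, an abelian variety of positive dimension; the claim is that $A'$ is normal split in $X$. The product structure $T_M \simeq \pr_A^* T_A \oplus \pr_Y^* T_Y$ restricts over $A'$ to a direct sum $T_M \otimes \sO_{A'} \simeq T_{A'} \oplus (T_{Y,y} \otimes \sO_{A'})$ whose second summand is trivial. Writing $j: A' \hookrightarrow M$ for the inclusion and $p: T_M \otimes \sO_{A'} \to T_{A'}$ for the projection onto the first factor, the tangent map $\tau_j: T_{A'} \to T_M \otimes \sO_{A'}$ is exactly the inclusion of the first summand, so that $p \circ \tau_j = \id_{T_{A'}}$.

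Finally I would produce the splitting for $A' \hookrightarrow X$. By functoriality of the tangent map applied to $A' \xrightarrow{j} M \xrightarrow{i} X$, we have $\tau_{A'} = (\tau_M \otimes \sO_{A'}) \circ \tau_j$. Setting $s_{A'} := p \circ (s_M \otimes \sO_{A'}) : T_X \otimes \sO_{A'} \to T_{A'}$, restriction of $s_M \circ \tau_M = \id_{T_M}$ gives $(s_M \otimes \sO_{A'}) \circ (\tau_M \otimes \sO_{A'}) = \id_{T_M \otimes \sO_{A'}}$, whence $s_{A'} \circ \tau_{A'} = p \circ \tau_j = \id_{T_{A'}}$. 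This shows $A'$ is normal split in $X$.

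I do not expect a serious obstacle here: the conceptual input is the Borel–Remmert decomposition, and the only genuine point to verify is that the normal splitting descends to the abelian slice. The slightly delicate aspect is that this descent uses \emph{both} the splitting $s_M$ of $M$ inside $X$ \emph{and} the product structure of $M$ (through the projection $p$); combining them correctly is what makes $s_{A'}$ a genuine splitting, and the verification above reduces this to a short diagram chase.
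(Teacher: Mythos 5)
Your proposal is correct and follows essentially the same route as the paper: global generation of $T_M$ via the splitting, the Borel--Remmert decomposition $M \simeq A \times Y$, the observation that the product structure makes $A$ normal split in $M$, and then transitivity of normal splitting along $A \hookrightarrow M \hookrightarrow X$. The only difference is that you verify the transitivity step by an explicit diagram chase, whereas the paper simply cites \cite[1.2.2]{Jah05} for it; your computation of $s_{A'} = p \circ (s_M \otimes \sO_{A'})$ is exactly the content of that reference.
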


\begin{proof}
Since $M$ is homogeneous, we have by \cite[Satz 1]{BR62} that
$M \simeq A \times Y$ with $A$ an abelian variety and $Y$ rational homogeneous. Since $M$ is not rational homogeneous, we have $\dim A>0$. Yet $M$ is a product, so
the abelian variety $A$ is normal split in $M$. Thus by \cite[1.2.2]{Jah05} the abelian variety $A$ is normal split in $X$. 
\end{proof}

\section{Geometry of  the projectivised tangent bundle}

Let $X$ be a complex manifold, denote by $\holom{\pi}{\PP(T_X)}{X}$ the Grothendieck projectivisation of its tangent bundle.
Let 
$$
0 \rightarrow \sO_{\PP(T_X)} \rightarrow \pi^* \Omega_X \otimes \sO_{\PP(T_X)}(1) \rightarrow 
T_{\PP(T_X)/X} \rightarrow 0
$$
be the relative Euler sequence. Dualising and tensoring by $\sO_{\PP(T_X)}(1)$ we obtain the canonical quotient map
\begin{equation} \label{definition-q}
\holom{q}{\pi^* T_X}{\sO_{\PP(T_X)}(1)}.
\end{equation}
The composition of the tangent map
$\holom{\tau_\pi}{T_{\PP(T_X)}}{\pi^* T_X}$
with the canonical quotient map $q$ defines an exact sequence
\begin{equation} \label{define-contact}
0 \rightarrow \sF \rightarrow T_{\PP(T_X)} \stackrel{q \circ \tau_\pi}{\rightarrow} \sO_{\PP(T_X)}(1) \rightarrow 0.
\end{equation}
The corank one distribution $\sF \subset T_{\PP(T_X)}$ is a contact distribution \cite[Sect.13.2]{Bla10}, i.e. the map 
$$
\bigwedge^2 \sF \rightarrow T_{\PP(T_X)}/\sF \simeq \sO_{\PP(T_X)}(1)
$$
induced by the Lie bracket on $\PX$ is surjective.

Assume now that $X \simeq G/P$ is rational homogeneous,
so the tautological bundle $\sO_{\PP(T_X)}(1)$ is nef and big \cite{Ric74}. 

Since $T_X$ is globally generated, the tautological line bundle
$\sO_{\PP(T_X)}(1)$ is globally generated and defines a morphism
$$
\holom{\varphi_{|\sO_{\PP(T_X)}(1)|}}{\PP(T_X)}{\PP(H^0(X, T_X))}
$$
such that $\sO_{\PP(T_X)}(1) \simeq \varphi_{|\sO_{\PP(T_X)}(1)|}^* \sO_{\PP(H^0(X, T_X))}(1)$.
We denote by
\begin{equation} \label{define-varphiX}
\holom{\varphi_X}{\PX}{Y}
\end{equation}
the Stein factorisation of $\varphi_{|\sO_{\PP(T_X)}(1)|}$
and by $L$ the pull-back of $\sO_{\PP(H^0(X, T_X))}(1)$
to $Y$. By construction $L$ is ample and $\sO_{\PP(T_X)}(1)  \simeq \varphi_X^* L$.

\begin{remark} \label{remark-fibres-varphi}
Since $\sO_{\PP(T_X)}(1)$ is nef and big, the morphism 
$\varphi_X$ is birational.
The canonical bundle of $\PX$ is isomorphic to $\sO_{\PP(T_X)}(-n)$,
so it is trivial on the fibres of $\varphi_X$.
Thus we have 
$$
K^*_Y  \simeq L^{\otimes n}
$$
and $Y$ is a Fano variety with canonical Gorenstein
singularities. In fact it is not difficult to see that $Y$ has an induced singular contact structure \cite{CF02}
given by a global section of $H^0(Y, \Omega_Y^{[1]} \otimes L)$.

We can apply
\cite[Thm.2]{Kaw91} to see that the irreducible components of $\varphi_X$-fibres are uniruled. We also know by \cite[Cor.1.5]{HM07} that the fibres are rationally chain-connected. Note however that this does not imply that the irreducible components are rationally chain-connected \cite[p.119]{HM07}
\end{remark}

The following statement, inspired by \cite[Prop.5.9]{MOSWW} is certainly well-known to experts, we give a detailed proof for the convenience of the reader:

\begin{lemma} \label{lemma-key}
Let $X \simeq G/P$ be a rational homogeneous space, and let 
$\holom{\pi}{\PP(T_X)}{X}$ be its projectivised cotangent bundle.
Let $F \subset \PP(T_X)$
be a smooth quasi-projective subvariety that is contracted by the birational map
\eqref{define-varphiX} onto a point. Then $F$ is an integral variety with respect to the contact distribution $\sF \subset T_{\PP(T_X)}$, i.e. one has
$$
T_{F} \subset \sF \otimes \sO_{F}.
$$
\end{lemma}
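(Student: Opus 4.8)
The plan is to prove the statement by showing directly that the composite
$$
T_F \hookrightarrow T_{\PP(T_X)} \otimes \sO_F \xrightarrow{\ q \circ \tau_\pi\ } \sO_{\PP(T_X)}(1) \otimes \sO_F
$$
vanishes; since $\sF = \ker(q\circ\tau_\pi)$ by \eqref{define-contact}, this is exactly the assertion $T_F \subseteq \sF\otimes\sO_F$. Write $\theta := q\circ\tau_\pi$ for the twisted contact form. First I would exploit the contraction. Since $\sO_{\PP(T_X)}(1)\simeq\varphi_X^*L$ and $F$ is mapped to a point, $\sO_{\PP(T_X)}(1)|_F$ is the pull-back of $L$ by a constant map, hence trivial; fix a section $s_0$ not vanishing on $F$ to trivialise it. Because $F$ is contracted, every $s\in H^0(\PP(T_X),\sO_{\PP(T_X)}(1))$ becomes, in this trivialisation, a constant multiple of $s_0$ on $F$, so $s/s_0$ is locally constant along $F$. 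Differentiating, for every $w\in T_\xi F$ and every $s$ one obtains
$$
ds(w) = c_w\, s(\xi), \qquad c_w := \frac{ds_0(w)}{s_0(\xi)},
$$
where $c_w$ depends linearly on $w$ but not on $s$. (This just records $T_\xi F\subseteq\ker d\varphi_X$.)

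Next I would bring in the contact structure through the global vector fields. Recall $H^0(\PP(T_X),\sO_{\PP(T_X)}(1)) = H^0(X,T_X) =: \mathfrak{g}$, and that each section $s_v$ equals $\theta(\tilde v)$, where $\tilde v$ is the canonical lift to $\PP(T_X)$ of $v\in\mathfrak{g}$; every $\tilde v$ is an infinitesimal contact automorphism. Trivialising $\sO_{\PP(T_X)}(1)$ locally so that $\theta$ becomes an honest contact form $\alpha$ with Reeb field $R$, the contact Hamiltonian identity $\iota_{\tilde v}\,d\alpha = R(h_v)\,\alpha - dh_v$ (with $h_v := \alpha(\tilde v)$), combined with the relation above for $s=s_v$, yields for $w\in T_\xi F$ and all $v\in\mathfrak{g}$ a single linear relation in $\mathfrak{g}^{*}$:
$$
\alpha(w)\,\big[v\mapsto R(h_v)(\xi)\big] \ =\ \big[v\mapsto d\alpha(\,(\tilde v)_{\sF},\,w_{\sF}\,)\big] \ +\ c_w\,\big[v\mapsto h_v(\xi)\big],
$$
where $w = \alpha(w)R + w_{\sF}$ and $\tilde v = h_v R + (\tilde v)_{\sF}$ are the decompositions along $R\oplus\sF$, the Reeb terms dropping out of $d\alpha$. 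The goal is to force the coefficient $\alpha(w) = \theta(w)$ to vanish.

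The conclusion then follows from a linear-independence statement: the functional $v\mapsto R(h_v)(\xi)$ must not lie in the span of the value functional $v\mapsto h_v(\xi)$ and of all horizontal functionals $v\mapsto d\alpha((\tilde v)_{\sF},u)$, $u\in\sF_\xi$. By nondegeneracy of $d\alpha$ on $\sF$ the horizontal functionals vanish on $v$ precisely when $(\tilde v)_{\sF}(\xi)=0$, i.e. when $\tilde v$ is proportional to $R$ at $\xi$; imposing in addition that the value functional vanishes forces $\tilde v(\xi)=0$. Thus the required independence amounts to producing $v_0\in\mathfrak{g}$ whose lift $\tilde v_0$ vanishes at $\xi$ while $R(h_{v_0})(\xi)\neq 0$, that is, an infinitesimal automorphism fixing $\xi$ but acting nontrivially in the Reeb direction. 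This is exactly where the homogeneity of $X\simeq G/P$ is indispensable, and it is the main obstacle: global generation of $T_X$ alone does not suffice, one must use the full $G$-action (the isotropy at $\xi$) to exhibit such a field, and this is also what lets the argument bypass the possible singularity of the $\varphi_X$-fibres noted in Remark \ref{remark-fibres-varphi}.

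Finally, I would record the conceptual picture that makes the above transparent and identifies the precise input. Passing to the symplectisation $T^*X$ with its canonical symplectic form, the morphism $\varphi_X$ becomes the projectivised moment map $\mu\colon T^*X\to\mathfrak{g}^{*}$ of the $\aut^{\circ}(X)$-action, and the statement $T_F\subseteq\sF\otimes\sO_F$ translates into the assertion that the cone over $F$ is isotropic. The linear-independence step above is exactly the infinitesimal shadow of the fact that the positive-dimensional fibres of the (birational, since $\sO_{\PP(T_X)}(1)$ is nef and big) moment map of $T^*(G/P)$ are isotropic. Granting this, one gets $\theta(w)=0$ for every $w\in T_\xi F$, which is precisely the claim that $F$ is an integral variety for the contact distribution $\sF$.
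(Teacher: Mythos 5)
Your reduction of the lemma to the vanishing of $\theta|_{T_F}$, and the derivation of the pointwise relation
$$
\alpha(w)\,R(h_v)(\xi) \;=\; d\alpha\bigl((\tilde v)_{\sF}, w_{\sF}\bigr) \;+\; c_w\, h_v(\xi)
\qquad (v\in\mathfrak g,\ w\in T_\xi F),
$$
are correct. The genuine gap sits exactly where you flag ``the main obstacle'': you never produce the field $v_0\in\mathfrak g$ with $\tilde v_0(\xi)=0$ and $R(h_{v_0})(\xi)\neq 0$, nor any other argument for the required linear independence. Asserting that the homogeneity of $G/P$ ``is indispensable'' is not a proof, and the existence of such a $v_0$ at every point of every positive-dimensional contracted subvariety is a nontrivial pointwise statement about the isotropy representation of $P$ that would itself need an argument (it genuinely fails for homogeneous $X$ with $T_X$ not big, e.g.\ the abelian surface of the paper's example, so nothing soft like global generation can supply it, and it is not even clear that the pointwise statement is equivalent to, rather than stronger than, the isotropy of the fibres). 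Your closing paragraph then replaces this missing step by ``granting'' that the positive-dimensional fibres of the generically finite moment map $T^*X\to\mathfrak g^*$ are isotropic --- but that is precisely the lemma in its symplectised form, so as written the argument is circular.

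The paper closes exactly this gap by importing the theorem that fibres of a symplectic resolution are isotropic (Wierzba, Namikawa), in the concrete form of Kaledin's descent lemma: on the $\C^*$-bundle $\mathcal X=\PP(T_X)\times_Y\mathcal Y$ the restriction of the symplectic form $\tilde\omega$ to $F\times T$ (with $T\simeq\C^*$ the fibre of $\mathcal Y\to Y$ over $\varphi_X(F)$) descends to a two-form on the curve $T$, hence vanishes; contracting $\tilde\omega$ with the Euler vector field of the $\C^*$-action then gives $\theta|_F\in H^0(F,N^*_{F/\PP(T_X)})$. To salvage your route you would have to either prove the existence of $v_0$ directly from the structure of $\mathfrak g$ and the isotropy at $\xi$, or cite the symplectic-resolution result --- at which point you are running the paper's proof. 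As it stands the decisive step is missing.
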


We will see that this lemma is a translation of the fact that fibres of a symplectic resolution
are isotropic with respect to the symplectic form \cite[Thm.1.2]{Wie03}, \cite{Nam01}, a strategy that appears in the literature at several places, e.g. \cite{Bea00, SW04}. 

Let us recall the relation
between the two setups:
let $\mathcal Y \rightarrow Y$ be the total space of $L^*$ with the zero section removed, and set $\mathcal X := \PP(T_X) \times_Y \mathcal Y$. Denote the natural maps by
$$
\holom{\tilde \varphi_X}{\mathcal X}{\mathcal Y}, \qquad \holom{\tau}{\mathcal X}{\PP(T_X)}.
$$
Then $\mathcal X \rightarrow \PP(T_X)$ is a $\C^*$-bundle and in fact the total space of $\sO_{\PP(T_X)}(-1)$ with the zero section removed \cite[Lemma 4.1]{Fu}. The spaces $\mathcal X$ and $\mathcal Y$
are symplectic and $\tilde \varphi_X$ is a symplectic resolution \cite[Lemma 4.2]{Fu}. More precisely the contact form on $\PP(T_X)$ is obtained from the symplectic form $\tilde \omega$ 
on $\mathcal X$ by contracting with a vector field generated by the $\C^*$-action (cf. proof of \cite[Lemma 4.2]{Fu}).

\begin{proof}[Proof of Lemma \ref{lemma-key}]
By \eqref{define-contact}
the contact distribution $\sF$ is the kernel of the contact map 
$$
T_{\PP(T_X)} \rightarrow \sO_{\PP(T_X)}(1) \rightarrow 0.
$$
We identify the contact map with a section  
$$
\theta \in H^0(\PP(T_X), \Omega_{\PP(T_X)} \otimes \sO_{\PP(T_X)}(1)).
$$
Since $\sO_{\PP(T_X)}(1) \simeq \varphi_X^* L$ we can find an analytic neighbourhood 
$U$ of $\fibre{\varphi_X}{\varphi_X(F)}$ such 
that $\sO_{\PP(T_X)}(1)$ is trivial on $U$.
Hence
$$
\theta|_U \in H^0(U, \Omega_U)
$$
and we are done if we show that
$$
\theta|_F \in H^0(F, N_{F/\PP(T_X)}^*),
$$
where $N_{F/\PP(T_X)}^* \subset \Omega_U \otimes \sO_F$ is the conormal bundle of $F$.

{\em Proof of the claim.} We will reduce the claim to the corresponding statement for the symplectic forms: 
let $T \simeq \C^*$ be the fibre of $\mathcal Y \rightarrow Y$ over $\varphi_X(F)$. Then $F \times_Y T \simeq F \times T \subset \mathcal X$ and we set 
$$
\holom{\sigma}{F \times T}{T},
$$
where $\sigma$ is the restriction of $\tilde \varphi_X$ to $F \times T$.
Then by \cite[Lemma 2.9]{Kal} there exists a dense open 
subset $T_0 \subset T$ and a holomorphic two-form $\omega_T$ on $T_0$ such that
$$
\sigma^* \omega_T = \eta|_{F \times T_0}
$$
where $\eta$ is the image of $\tilde \omega$ under the restriction map
$$
H^0(\mathcal X, \Omega^2_{\mathcal X}) \rightarrow
H^0(F \times T, \Omega^2_{\mathcal X} \otimes \sO_{F \times T}) \rightarrow
H^0(F \times T, \Omega^2_{F \times T}).
$$
Since $T_0$ is a curve, the holomorphic two-form $\omega_T$ is zero, hence $\eta=0$. By \cite[II, Ex.5.16]{Har77} the conormal sequence
$$
0 \rightarrow N_{F \times T/\mathcal X}^* \rightarrow \Omega_{\mathcal X}  \otimes \sO_{F \times T} \rightarrow
\Omega_{F \times T} \rightarrow 0
$$
induces a filtration of the kernel $\sK$ of the surjection $\Omega^2_{\mathcal X} \otimes \sO_{F \times T}
\rightarrow \Omega^2_{F \times T}$:  
\begin{equation} \label{filtrate}
0 \rightarrow \bigwedge^2 N_{F \times T/\mathcal X}^*
\rightarrow \sK \rightarrow
N_{F \times T/\mathcal X}^* \otimes \Omega_{F \times T} 
\rightarrow 0.
\end{equation}
By what precedes we know that 
$$
\tilde \omega|_{F \times T} \in H^0(F \times T, \sK),
$$
we will now deduce $\theta|_F \in H^0(F, N_{F/\PP(T_X)}^*)$: by the discussion before the proof $\theta|_F$
is obtained from $\tilde \omega|_{F \times T}$ by contracting with a vector field $v$ generated by the $\C^*$-action. 
Since this vector field is mapped onto zero by $\tau$, the contraction with a $2$-form that is a pull-back from $F$ is equal to zero. Since 
$$
N_{F \times T/\mathcal X}^* = \tau^* N_{F/\PP(T_X)}^*
$$
we obtain from \eqref{filtrate} that the contraction map 
$\sK \stackrel{\lrcorner v}{\longrightarrow} \Omega_{\PP(T_X)}|_F$
factors through a morphism
$$
N_{F \times T/\mathcal X}^* \otimes \Omega_{F \times T}  \stackrel{\lrcorner v}{\longrightarrow} 
 \Omega_{\PP(T_X)}|_F.
$$
Yet
$$
\Omega_{F \times T} \simeq \tau^* \Omega_F \oplus \varphi_X^* \Omega_T,
$$
so if we decompose $\tilde \omega|_{F \times T}=\tilde \omega_1 + \tilde \omega_2$
according to the direct sum
$$
N_{F \times T/\mathcal X}^* \otimes \Omega_{F \times T}  
\simeq
\left( \tau^* N_{F/\PP(T_X)}^* \otimes \tau^* \Omega_F \right)
\oplus
\left( \tau^* N_{F/\PP(T_X)}^* \otimes \varphi_X^* \Omega_T \right)
$$
we see that $\tilde \omega_1 \lrcorner v=0$ while 
$$
 \tilde \omega_2 \lrcorner v \in H^0(F, N_{F/\PP(T_X)}^*).
$$
Since $\theta|_F =(\tilde \omega|_{F \times T})  \lrcorner v =  \tilde \omega_2 \lrcorner v$ this shows the claim.
\end{proof}

The following example shows that the crucial point in Lemma \ref{lemma-key} is that the contact form on $\PX$ is a reflexive pull-back from the singular space $Y$, i.e. we use
that $T_X$ is nef {\em and big}.

\begin{example}
Let $X=\C^2/\Lambda$ be an abelian surface, so $X$ is homogeneous and the natural
map \eqref{define-varphiX} is given by the projection
$$
\holom{\varphi_X}{\PX \simeq X \times \PP^1}{\PP^1}.
$$
We will now follow the notation of \cite[Sect.13.2]{Bla10} for the local computation of the contact form $\theta$:
for linear coordinates $z_1, z_2$ on $\C^2$ the  contact form on $\PX$ is
$$
\theta = \sum_{i=1}^2 d z_i \otimes \zeta_i
$$
where $\sum_{i=1}^2 \zeta_i dz_i$ are fibrewise coordinates on $\PP(T_{X,x}) \simeq {\bf P}(\Omega_{X,x})$ (where ${\bf P}(\Omega_{X,x})$ is the space of lines in $\Omega_{X,x}$).

Assume now that $A \subset X$ is an elliptic curve corresponding to the linear subspace $\C z_1 \subset \C^2$,
then 
$$
\frac{\partial}{\partial z_1} \mapsto \frac{\partial}{\partial z_1},
\qquad
\frac{\partial}{\partial z_2} \mapsto 0
$$
defines a splitting $T_X \otimes \sO_A \rightarrow T_A$ of the tangent map $\tau_A$.
The curve $\upA \subset \PX$ is contained in
$X \times {\bf P}(dz_1) \subset X \times {\bf P}(\Omega_{X})$, so
it is contracted by $\varphi_X$.
The restriction of $\theta$ to $X \times {\bf P}(dz_1)$ is simply
the form $dz_1$, in particular the composition
$$
T_{\upA} \hookrightarrow T_{\PX} \otimes \sO_{\upA}
\stackrel{v \mapsto dz_1(v)}{\longrightarrow} \sO_{\upA}
$$
is surjective.
\end{example}

We make a basic observation:

\begin{lemma} \label{lemma-observation}
Let $X$ be a projective manifold, and let 
$\holom{\pi}{\PP(T_X)}{X}$ be its projectivised cotangent bundle.
Let $A \subset X$ be an abelian variety that
is normal split with splitting map $\holom{s_A}{T_X \otimes \sO_A}{T_A}$ and fix a quotient  $\holom{q_A}{T_A}{\sO_A}$.
Let $\holom{\sigma_A}{A}{\PP(T_X)}$ be the lifting determined by the 
quotient line bundle
$$
\holom{q_A \circ s_A}{T_X \otimes \sO_A}{\sO_A},
$$
and denote by $\upA \subset \PX$ its image. Since $\upA$ maps isomorphically onto its image in $X$, 
we can consider the tangent morphism
$$
\tau_A: T_{\tilde A} \simeq T_A \rightarrow T_X \otimes \sO_A \simeq \pi^* (T_X) \otimes \sO_{\upA}.
$$ 
Then the composition with the canonical quotient map \eqref{definition-q} gives a surjective map
$$
q \circ \tau_A: T_{\tilde A}  \simeq T_A \rightarrow \pi^* (T_X) \otimes \sO_{\upA} \rightarrow 
\sO_{\PP(T_X)}(1) \otimes \sO_{\upA}.
$$
\end{lemma}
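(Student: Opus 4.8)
The plan is to reduce the assertion to the splitting identity $s_A \circ \tau_A = \id_{T_A}$ by means of the universal property of the Grothendieck projectivisation. The guiding observation is that the restriction of the tautological quotient $q$ to $\upA$ is exactly the line bundle quotient $q_A \circ s_A$ that was used to build the lifting $\sigma_A$; postcomposing the tangent map $\tau_A$ with $q$ therefore recovers the quotient $q_A$ of $T_A$, which is surjective by construction.

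First I would record the relevant identifications. Since $\upA$ maps isomorphically onto its image in $X$, the projection $\pi$ restricts to an isomorphism $\upA \to A$, which is what justifies writing $T_{\upA} \simeq T_A$ and $\pi^* T_X \otimes \sO_{\upA} \simeq T_X \otimes \sO_A$. Moreover $\pi \circ \sigma_A$ is the embedding $A \hookrightarrow X$, so its tangent map factors as $\tau_A = \sigma_A^*(\tau_\pi) \circ \tau_{\sigma_A}$; under the above identification this is precisely the map denoted $\tau_A$ in the statement, and composing it with $q$ amounts to composing with the pullback $\sigma_A^* q$.

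Next I would invoke the universal property of $\PP(T_X)$. By definition $\sigma_A$ is the morphism associated with the surjection $q_A \circ s_A: T_X \otimes \sO_A \to \sO_A$, which means exactly that $\sigma_A^* \sO_{\PP(T_X)}(1) \simeq \sO_A$ and that the pullback $\sigma_A^* q$ of the canonical quotient \eqref{definition-q} is identified with $q_A \circ s_A$. Hence the map in the statement is
$$
q \circ \tau_A = (q_A \circ s_A) \circ \tau_A = q_A \circ (s_A \circ \tau_A) = q_A,
$$
where the last equality uses that $s_A$ splits $\tau_A$. Since $q_A$ is a quotient map it is surjective, which proves the claim. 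The argument is entirely formal once the Grothendieck convention (so that $\PP(T_X)$ parametrises rank-one quotients of $T_X$) is fixed; the only point demanding care, and the one I would expect to be the sole subtlety, is the identification $\sigma_A^* q = q_A \circ s_A$ of the tautological quotient with the quotient defining the lifting. No positivity or deeper geometric input is needed, in contrast with Lemma \ref{lemma-key}.
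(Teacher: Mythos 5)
Your proposal is correct and follows essentially the same route as the paper: the universal property of the Grothendieck projectivisation identifies $\sigma_A^* q$ with $q_A \circ s_A$, and the splitting identity $s_A \circ \tau_A = \id_{T_A}$ then gives $q \circ \tau_A = q_A$, which is surjective. The extra care you take in spelling out the identifications via $\pi|_{\upA}$ is consistent with (and slightly more explicit than) the paper's argument.
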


\begin{proof}[Proof of Lemma \ref{lemma-observation}]
The lifting $\sigma_A$ is determined by the quotient line bundle
$\holom{q_A \circ s_A}{T_X \otimes \sO_A}{\sO_A}$,
so by the universal property of the projectivisation \cite[App.A]{Laz04a} 
the pull-back of the canonical quotient  map $\holom{q}{\pi^* T_X}{\sO_{\PP(T_X)}(1)}$ via $\sigma_A$ identifies to $q_A \circ s_A$.
Since $s_A$ is a splitting map for $\holom{\tau_A}{T_A}{T_X \otimes \sO_A}$,
the composition $q_A \circ s_A \circ \tau_A = q_A$ is surjective.
\end{proof}

\begin{remark} \label{remark-lift-rational}
It is instructive to compare the situation with liftings of rational curves:
let $X$ be a smooth quadric surface, and let $l \subset X$ be a line of a ruling.
Then $l \subset X$ is normal split: we have 
$$
T_X \otimes \sO_l \simeq T_l \oplus \sO_l,
$$
so the trivial quotient $T_X \otimes \sO_l \rightarrow \sO_l$ determines a lifting
of $l$ to $\PP(T_X)$ such that the image $\tilde l$ is contained in a $\varphi_X$-fibre. 
However the morphism
$$
T_{\tilde l} \rightarrow \pi^* (T_X) \otimes \sO_{\tilde l} \rightarrow \sO_{\tilde l}
$$
is not surjective: we have $T_{\tilde l} \simeq \sO_{\PP^1}(2)$, so any morphism to a trivial bundle must vanish. The difference to Lemma \ref{lemma-observation}
is that the trivial quotient $T_X \otimes \sO_l \rightarrow \sO_l$ does not factor through 
a morphism $T_X \otimes \sO_l \rightarrow T_l$.
\end{remark}

\begin{proof}[Proof of Theorem \ref{theorem-split}]
We argue by contradiction and assume that the submanifold $M \subset X$ is not rational homogeneous. By Lemma \ref{lemma-reductions} we can assume without loss of generality
that $M \simeq A$ is an abelian variety. 
We fix a splitting map $\holom{s_A}{T_X \otimes \sO_A}{T_A}$ and a trivial quotient $\holom{q_A}{T_A}{\sO_A}$.
Denote by $\upA \subset \PX$ the lifting of $A$ to $\PX$ determined by the quotient
$q_A \circ s_A$. By Lemma \ref{lemma-observation}
the map
$$
q \circ \tau_A: T_{\tilde A} \rightarrow
\sO_{\PP(T_X)}(1) \otimes \sO_{\upA}
$$
is surjective. Since $\upA \subset \PX$ the tangent map $\tau_A$ factors through
through the tangent map
$$
\tau_{\upA} : T_{\upA} \rightarrow T_{\PX} \otimes \sO_{\upA},
$$
and we have a commutative diagram
$$
\xymatrix{
T_{\upA} \ar[rr]^{\tau_{\upA}} \ar[d]^{\simeq} & & T_{\PX} \otimes \sO_{\upA} \ar[d]^{\tau_\pi} &
\\
T_A  \ar[rr]^{\tau_{A}} & &  \pi^* T_X \otimes \sO_A  \ar[r]^q &  \sO_{\PP(T_X)}(1) \otimes \sO_{\upA}
}
$$
By construction the contact map \eqref{define-contact}
is the composition
$q \circ \tau_\pi$. Since $q \circ \tau_A$ is surjective, we obtain that $q \circ \tau_\pi \circ \tau_{\upA}$
is surjective, in particular $\upA \subset \PX$ is not integral with respect to the contact structure. Yet the lifting $A \rightarrow \PX$ is given by a trivial line bundle, so
$$
\varphi_X^* L \otimes \sO_{\upA} \simeq
\sO_{\PP(T_X)}(1) \otimes \sO_{\upA} \simeq \sO_{\upA}.
$$ 
Since $L$ is an ample line bundle on $Y$,
we see that $\upA$ is contracted by $\varphi_X$ onto a point. 
Thus we have a contradiction to Lemma \ref{lemma-key}.
\end{proof}

\begin{remark} \label{remark-nef-and-big}
Let us conclude by indicating a variant of Theorem \ref{theorem-split} under the weaker  assumption that $T_X$ is nef and big. We claim that in this case a normal split submanifold $M \subset X$ is a Fano manifold with semiample tangent bundle.

\begin{proof}
The basepoint-free theorem  implies that $T_X$
is semiample in the sense of \cite{Fuj92}, cf. \cite[Prop.5.5]{MOSWW} for a proof. 
In particular we can define the birational morphism \eqref{define-varphiX} using the global sections of some positive multiple of $\sO_{\PX}(1)$, and Lemma \ref{lemma-key} 
holds for this morphism.

Since $M \subset X$ is normal split, its tangent bundle $T_M$ is also semi-ample. Moreover, by \cite[Main Thm.]{DPS94} there exists a finite \'etale cover $\holom{\eta}{M'}{M}$ such that $M'$
admits a smooth fibration $\holom{f}{M'}{A}$ onto an abelian variety such that the general fibre is Fano. Arguing by contradiction we assume that $A$ is not a point. Since $\eta$ is \'etale, the splitting map
$\holom{s_M}{T_X \otimes \sO_M}{T_M}$
lifts to splitting map 
$$
\holom{\tilde s_M}{\eta^* (T_X \otimes \sO_M)}{\eta^* T_M \simeq T_{M'}}.
$$
Since $T_{M'}$ is semiample \cite[Lemma 1]{Fuj92} the tangent map $\holom{\tau_f}{T_{M'}}{T_A}$ splits
by \cite[Cor.4]{Fuj92}. Thus we have $T_{M'} \simeq T_{M'/A} \oplus \sO_{M'}^{\oplus \dim A}$ and we can use a quotient line bundle $T_{M'} \twoheadrightarrow \sO_{M'}$ to define a lifting $M' \rightarrow \PX$ such that the image is contracted by $\varphi_X$ onto a point. Now the proof of Theorem \ref{theorem-split} yields a contradiction.
\end{proof}
\end{remark}

%\bibliographystyle{alpha}
%\bibliography{Biblio}

\newcommand{\etalchar}[1]{$^{#1}$}

\end{document}